\DeclareMathOperator{\dist}{dist}
\def\rep{\mathop{\rm rep }\nolimits}
\def\Rep{\mathop{\rm Rep }\nolimits}
\DeclareMathOperator{\spec}{sp}
\newcommand\blfootnote[1]{%
  \begingroup
  \renewcommand\thefootnote{}\footnote{#1}%
  \addtocounter{footnote}{-1}%
  \endgroup
}
\def\z{\mbox{\boldmath $z$}}
\def\vec0{\mbox{\boldmath $0$}}
\def\I{\mbox{\boldmath $I$}}
\def\M{\mbox{\boldmath $M$}}
\def\I{\mbox{\boldmath $I$}}
\def\M{\mbox{\boldmath $M$}}
\theoremstyle{plain}   
\newtheorem{theorem}{Theorem}[section]
\newtheorem{proposition}[theorem]{Proposition}
\newtheorem{corollary}[theorem]{Corollary}
\begin{document}

\begin{frontmatter}

\title{An improved upper bound for the order of mixed graphs}


\author{C. Dalf\'o}
\address{Dept. de Matem\`atiques, Universitat Polit\`ecnica de Catalunya, Barcelona, Catalonia}
\ead{cristina.dalfo@upc.edu}

\author{M. A. Fiol}
\address{Dept. de Matem\`atiques, Barcelona Graduate School of Mathematics, Universitat Polit\`ecnica de Catalunya, Barcelona, Catalonia}
\ead{miguel.angel.fiol@upc.edu}

\author{N. L\'opez}
\address{Dept. de Matem\`atica, Universitat de Lleida, Lleida, Spain}
\ead{nlopez@matematica.udl.es}

\begin{abstract}
A  mixed graph $G$ can contain both (undirected) edges and arcs (directed edges).
Here we derive an improved Moore-like bound for the maximum number of vertices of a mixed graph with diameter at least three. Moreover, a complete enumeration of all optimal $(1,1)$-regular mixed graphs with diameter three is presented, so proving that, in general, the proposed bound cannot be improved.
\end{abstract}

\begin{keyword}
 Mixed graph \sep Moore bound \sep network design \sep degree/diameter problem

\emph{2010 MSC:} 05C30, 05C35
\end{keyword}

\end{frontmatter}

\blfootnote{
\begin{minipage}[l]{0.3\textwidth} \includegraphics[trim=10cm 6cm 10cm 5cm,clip,scale=0.15]{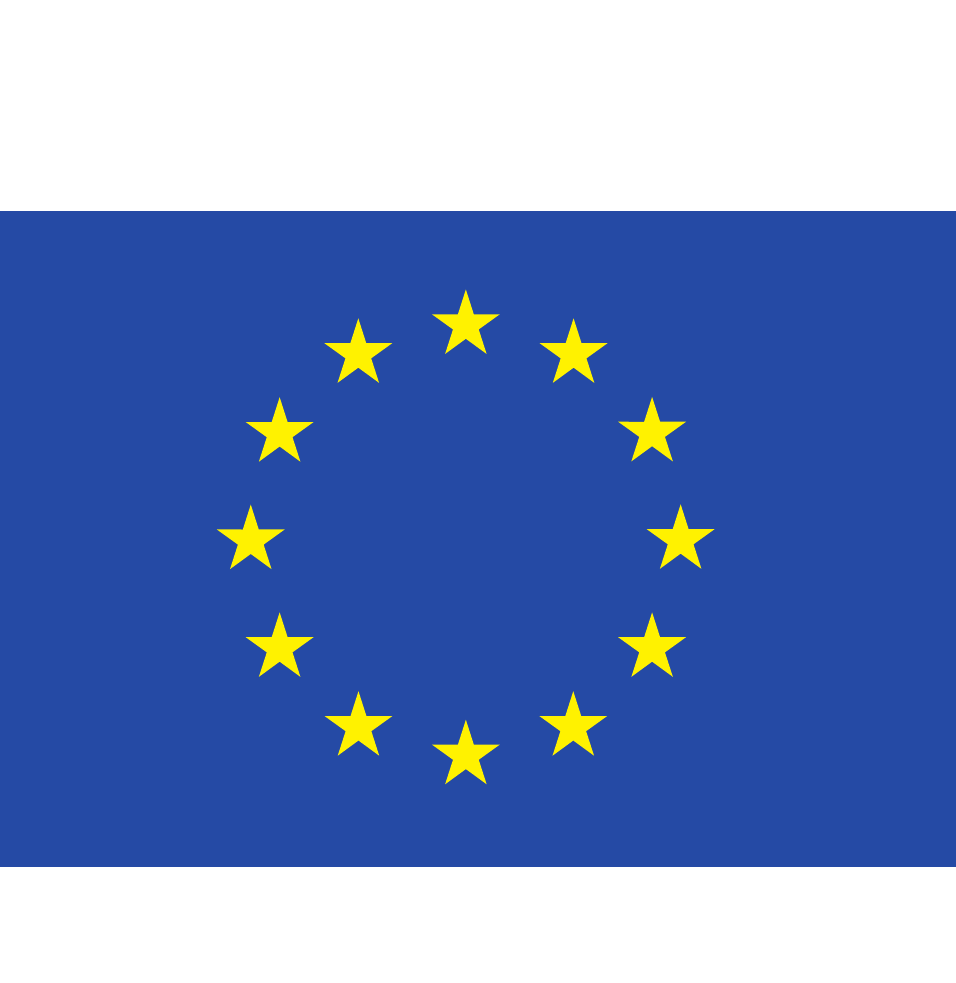} \end{minipage}  \hspace{-2cm} \begin{minipage}[l][1cm]{0.79\textwidth}
   The research of C. Dalf\'o has also received funding from the European Union's Horizon 2020 research and innovation programme under the Marie Sk\l{}odowska-Curie grant agreement No 734922.
  \end{minipage}}

\section{Introduction}
A {\em mixed} (or {\em partially directed\/}) graph $G=(V,E,A)$ consists of a set $V$ of vertices, a set $E$ of edges, or unordered pairs of vertices, and a set $A$ of arcs, or ordered pairs of vertices.
Thus, $G$ can also be seen as a digraph having {\em digons\/}, or pairs of opposite arcs between some pairs of vertices.
If there is an edge between vertices $u,v\in V$, we denote it by $u\sim v$, whereas if there is an arc from $u$ to $v$, we write $u\rightarrow v$.
We denote by $r(u)$ the {\em undirected degree} of $u$, or the number of edges incident to $u$. Moreover, the {\em out-degree\/} [respectively, {\em in-degree\/}] of  $u$, denoted by $z^+(u)$ [respectively, $z^-(u)$], is the number of arcs emanating from [respectively, to] $u$. If $z^+(u)=z^-(u)=z$ and $r(u)=r$, for all $u \in V$, then $G$ is said to be {\em totally regular\/} of degrees $(r,z)$, with $r+z=d$ (or simply {\em $(r,z)$-regular\/}).
The length of a shortest path from $u$ to $v$ is the {\it distance\/} from $u$ to $v$, and it is denoted by $\dist(u,v)$. Note that $\dist(u,v)$ may be different from $\dist(v,u)$ when the shortest paths between $u$ and $v$ involve arcs. The maximum distance between any pair of vertices is the {\it diameter} $k$ of $G$. Given $i\le k$, the set of vertices at distance $i$ from vertex $u$ is denoted by $G_{i}(u)$.

As in the case of (undirected) graphs and digraphs, the degree/diameter problem for mixed graphs calls for finding the largest possible number of vertices $N(r,z,k)$ in a mixed graph with maximum undirected degree $r$, maximum directed outdegree $z$, and diameter $k$.
A bound for $N(r,z,k)$ is called a Moore(-like) bound. It is obtained by counting the number of vertices of a {\em Moore tree\/} $MT(u)$ rooted at a given vertex $u$, with depth equal to the diameter $k$, and assuming that for any vertex $v$ there exists a unique shortest path of length at most $k$ (with the usual meaning when we see $G$ as a digraph) from $u$ to $v$. The number of vertices in $MT(u)$, which is denoted by $M(r,z,k)$, was given by Buset, Amiri, Erskine, Miller, and P\'erez-Ros\'es \cite{baemp15}, and it is the following:
\begin{equation}
\label{eq:moorebound4}
M(r,z,k)=A\frac{u_1^{k+1}-1}{u_1-1}+B\frac{u_2^{k+1}-1}{u_2-1},
\end{equation}
where
\begin{align*}
v   &=(z+r)^2+2(z-r)+1, \\
u_1 &=\displaystyle{\frac{z+r-1-\sqrt{v}}{2}}, \qquad
u_2 =\displaystyle{\frac{z+r-1+\sqrt{v}}{2}}, \\
A   &=\displaystyle{\frac{\sqrt{v}-(z+r+1)}{2\sqrt{v}}}, \qquad
B   =\displaystyle{\frac{\sqrt{v}+(z+r+1)}{2\sqrt{v}}}.
\end{align*}

This bound applies when $G$ is totally regular with degrees $(r,z)$. Moreover, if we bound the total degree $d=r+z$, the largest number is always obtained when $r=0$ and $z=d$. That is, when the mixed graph has no (undirected) edges. In Table~\ref{table:1} we show the values of \eqref{eq:moorebound4} when $r=d-z$, with $0\le z\le d$, for different values of $d$ and diameter $k$. In particular, when $z=0$, the bound corresponds to the Moore bound for graphs (numbers in bold).

\medskip

\begin{table}[h!]
\centering
\begin{tabular}{||@{\,}ccccccc@{\,}||}
\hline\hline
$d\diagdown k$  & \vline & $1$ & $2$ & $3$ & $4$ & $5$ \\
\hline
1& \vline  &   {\bf 2}   &     $z+{\bf 2}$    &   $2z+{\bf 2}$ &  $z^2+2z+{\bf 2}$   &  $2z^2+2z+{\bf 2}$ \\
2& \vline & {\bf 3} & $z+{\bf 5}$ &  $4z+{\bf 7}$ & $z^2+9z+{\bf 9}$ & $5z^2+16z+{\bf 11}$ \\
3& \vline & {\bf  4} & $z+{\bf 10}$ & $6z+{\bf 22}$ & $\z^2+22z+{\bf 46}$ & $8z^2+66z+{\bf 94}$ \\
4& \vline & {\bf 5} & $z+{\bf 17}$ & $8z+{\bf 53}$ & $z^2+41z+{\bf 161}$ & $11z^2+176z+{\bf 485}$ \\
5& \vline & {\bf 6} & $z+{\bf 26}$ & $10z+{\bf 106}$ & $z^2+66z+{\bf 426}$ & $14z^2+370z+{\bf 1706}$\\
\hline\hline
\end{tabular}
\caption{Moore bounds according to \eqref{eq:moorebound4}.}
\label{table:1}
\end{table}

\section{A new upper bound}

An alternative approach
for computing the bound given by \eqref{eq:moorebound4} is the following (see also  \cite{dfl16}).
Let $G$ be a $(r,z)$-regular mixed graph with $d=r+z$. Given a vertex $v$ and for $i=0,1,\ldots,k$, let $N_i=R_i+Z_i$ be the maximum possible number of vertices at distance
$i$ from $v$. Here, $R_i$ is the number of vertices that, in
the corresponding tree rooted at $v$, are adjacent by an edge to their parents; and  $Z_i$
is the number of vertices that are adjacent by an arc from their parents. Then,
\begin{equation}
\label{Ni}
N_i = R_i+Z_i = R_{i-1}((r-1)+z)+Z_{i-1}(r+z).
\end{equation}
That is,
\begin{align}
R_i & = R_{i-1}(r-1)+Z_{i-1}r,  \label{Ri}\\
Z_i & = R_{i-1}z+Z_{i-1}z, \label{Zi}
\end{align}
or, in matrix form,
$$
\left(
\begin{array}{c}
  R_i \\
  Z_i
\end{array}
\right)=
\left(
\begin{array}{cc}
  r-1 & r\\
  z   & z
\end{array}
\right)
\left(
\begin{array}{c}
  R_{i-1} \\
  Z_{i-1}
\end{array}
\right)=\cdots=\M^i\left(
\begin{array}{c}
  R_{0} \\
  Z_{0}
\end{array}
\right)=
\M^i\left(
\begin{array}{c}
  0 \\
  1
\end{array}
\right),
$$
where $\M=\left(
\begin{array}{cc}
  r-1 & r\\
  z   & z
\end{array}
\right)$ and, by convenience, $R_0=0$ and $Z_0=1$. Therefore,
$$
N_i = R_i+Z_i =\left(
\begin{array}{cc}
  1 & 1
\end{array}
\right)\M^i\left(
\begin{array}{c}
  0 \\
  1
\end{array}
\right).
$$
Consequently, after summing a geometric matrix progression, the order of $MT(u)$ turns out to be
\begin{equation}
\label{MooreBound2}
  M(r,z,k) = \sum_{i=0}^{k}N_i =\frac{1}{r+2z-2}\left(
\begin{array}{cc}
  1 & 1
\end{array}
\right)(\M^{k+1}-\I)\left(
\begin{array}{c}
  r \\
  z
\end{array}
\right),
\end{equation}
with $r+2z\neq2$, that is, except for the cases $(r,z)=(0,1)$ and $(r,z)=(2,0)$, which correspond to a directed and undirected cycle, respectively.

Alternatively, note that $N_i$ satisfies an easy linear recurrence formula (see again
Buset, El Amiri, Erskine, Miller, and P\'erez-Ros\'es~\cite{baemp15}).
Indeed, from \eqref{Ni} and \eqref{Zi} we have that
$Z_i=z(N_{i-1}-Z_{i-1})+zZ_{i-1}=zN_{i-1}$ and, hence,
\begin{align}
\label{recurNi}
N_i & =(r+z)N_{i-1}-R_{i-1}=(r+z)N_{i-1}-(N_{i-1}-Z_{i-1}) \nonumber\\
    & =(r+z-1)N_{i-1}+zN_{i-2},\qquad i=2,3,\ldots
\end{align}
with initial values $N_0=1$ and $N_1=r+z$.

In this context,  Nguyen, Miller, and Gimbert \cite{nmg07} showed that the bound in \eqref{eq:moorebound4} is not attained for diameter $k\ge 3$ and, hence, that {\em mixed Moore graphs\/} do not exist in general.
More precisely, they proved that there exists a pair of vertices $u,v$ such that there are two different paths of length $\le k$ from $u$ to $v$. When there exist exactly two such paths, the usual terminology is to say that $v$ is the {\em repeat} of $u$, and this is denoted by writing $\rep(u)=v$ (see, for instance, Miller and \v{S}ir\'a\v{n} \cite{ms13}). Extending this concept, we denote by $\Rep(u)$ the set (or multiset)  of vertices $v$ such that there are $\nu\ge 2$ paths of length $\le k$ from $u$ to $v$, in such a way that each $v$ appears $\nu-1$ times in $\Rep(u)$. (In other words, we could say that vertex $v$ is ``repeated'' or ``revisited" $\nu-1$ times when reached from $u$.) Then, as a consequence, the number $N$ of vertices of $G$ must satisfy the bound
$$
N\le |MT(u)|-|\Rep(u)|= M(r,z,k)-|\Rep(u)|.
$$
We use this simple idea in the proof of our main result.

\begin{theorem}\label{the:newbound}
\label{theo1}
The order $N$ of a $(r,z)$-regular mixed graph $G$ with diameter $k\ge3$ satisfies the bound
\begin{equation}
\label{improved bound}
N\le M(r,z,k)-r,
\end{equation}
where $M(r,z,k)$ is given by \eqref{eq:moorebound4}.
\end{theorem}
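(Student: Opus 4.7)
The plan is to use the inequality $N \le M(r,z,k) - |\Rep(u)|$ derived in the paragraph preceding the theorem, and to prove the sharper statement $|\Rep(u)| \ge r$ for some (in fact any) vertex $u$. This reduces the theorem to a local combinatorial claim about the Moore tree $MT(u)$.

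I would fix a vertex $u$ and single out its $r$ edge-neighbors $v_1,\ldots,v_r$, which exist by the $(r,z)$-regularity. A natural class of candidates contributing to $\Rep(u)$ are the $v_i$ themselves: each $v_i$ is reached from $u$ by the direct edge $u\sim v_i$, a walk of length one, and I would try to exhibit for each $i$ a second distinct walk of length at most $k$ from $u$ to $v_i$. If successful, this yields $v_i\in\Rep(u)$ for every $i$, and since the $v_i$ are distinct we obtain $|\Rep(u)|\ge r$.

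The existence of a second walk to each $v_i$ should follow from the hypothesis $k\ge 3$ together with the result of Nguyen, Miller, and Gimbert~\cite{nmg07}: for $k\ge 3$ the Moore bound cannot be attained, so there must be enough overlap in $MT(u)$ to force such second walks. Concretely, one could compose the edge $u\sim v_i$ with a walk from $v_i$ of length at most $k-1$ that enters $v_i$ from a different neighbor, namely via one of the other $r-1$ edges or $z$ out-arcs at $v_i$; the diameter-$k$ hypothesis and the structure of $MT(u)$ should guarantee such a return walk exists for each $i$.

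I expect the main obstacle to be verifying that the second walks obtained for different indices $i$ correspond to genuinely distinct contributions to the multiset $\Rep(u)$, and not to walks already counted in the canonical structure of the Moore tree. Although the first step of each excess walk is a different edge $u\sim v_i$, the delicate point is to control the subtree of $MT(u)$ rooted at $v_i$ using the recursion $N_i=(r+z-1)N_{i-1}+zN_{i-2}$ and to rule out accidental coincidences with walks originating in other branches. Once this is carried out, the bound $|\Rep(u)|\ge r$ follows and, combined with $N\le M(r,z,k)-|\Rep(u)|$, yields $N\le M(r,z,k)-r$.
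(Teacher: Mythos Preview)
Your plan has a genuine gap: you assert that for each edge-neighbour $v_i$ of $u$ there is a second walk of length at most $k$ from $u$ to $v_i$, but you give no mechanism that forces this, and in fact the claim need not hold. The Nguyen--Miller--Gimbert result you invoke only produces \emph{one} repeated vertex for \emph{some} root; it says nothing about $r$ distinct repeats, let alone that the repeats are precisely the $v_i$. Your concrete suggestion, to ``compose the edge $u\sim v_i$ with a walk from $v_i$ of length at most $k-1$ that enters $v_i$ from a different neighbor'', does not construct a $u\to v_i$ walk at all: starting with $u\sim v_i$ and continuing from $v_i$ takes you \emph{away} from $v_i$, and the diameter hypothesis yields no non-trivial return path since $\dist(u,v_i)=1$ is already realised by the edge. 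The recursion for $N_i$ does not help here either; it counts branch sizes in the idealised Moore tree and carries no information about which specific vertices are repeated.

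What the paper actually does is quite different and more delicate. For each $i$ it does \emph{not} insist that $v_i$ itself is repeated; instead it fixes an out-arc neighbour $v_{ij}$ of $v_i$ and runs a case analysis on the distances $\dist(u_h,v_{ij})$ from the arc-neighbours $u_h$ of $u$. Depending on the case, the repeated vertex attached to index $i$ is $v_{ij}$, or $v_i$, or a predecessor $w_{ij\ell}$ of $v_{ij}$ on one of those shortest paths. The decisive step (case~$(iv)$) uses the \emph{in-degree} regularity $z^-(v_{ij})=z$: if the $z$ predecessors $w_{ij\ell}$ are all distinct then, together with the arc $v_i\to v_{ij}$, one of them must reach $v_{ij}$ through an \emph{edge}, and the path $u\sim v_i\to v_{ij}\sim w_{ij\ell}$ of length $3\le k$ gives a second route to $w_{ij\ell}$. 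This is where the hypothesis $k\ge 3$ enters and where the in-regularity is essential---both ingredients absent from your outline.
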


\begin{proof}
It is clear that we can assume that there are no parallel arcs or edges. Let $u$ be a vertex with edges to the vertices $v_1,\ldots,v_r$ and arcs to the vertices $u_1,\ldots,u_z$. For each $i=1,\ldots,r$, let $v_{i1},\ldots,v_{iz}$ be the vertices adjacent (through arcs) from $v_i$. (The situation in the case $r=z=2$ is depicted in Figure~\ref{fig1}, where the dashed lines represent paths.) Now, for some fixed $i=1,\ldots,r$
and $j=1,\ldots,z$,
let us consider the following possible  cases for the distance from a vertex in $\{u_1,\ldots,u_z\}$ to vertex $v_{ij}$:

\begin{figure}[t]
    \begin{center}
        \includegraphics[width=12cm]{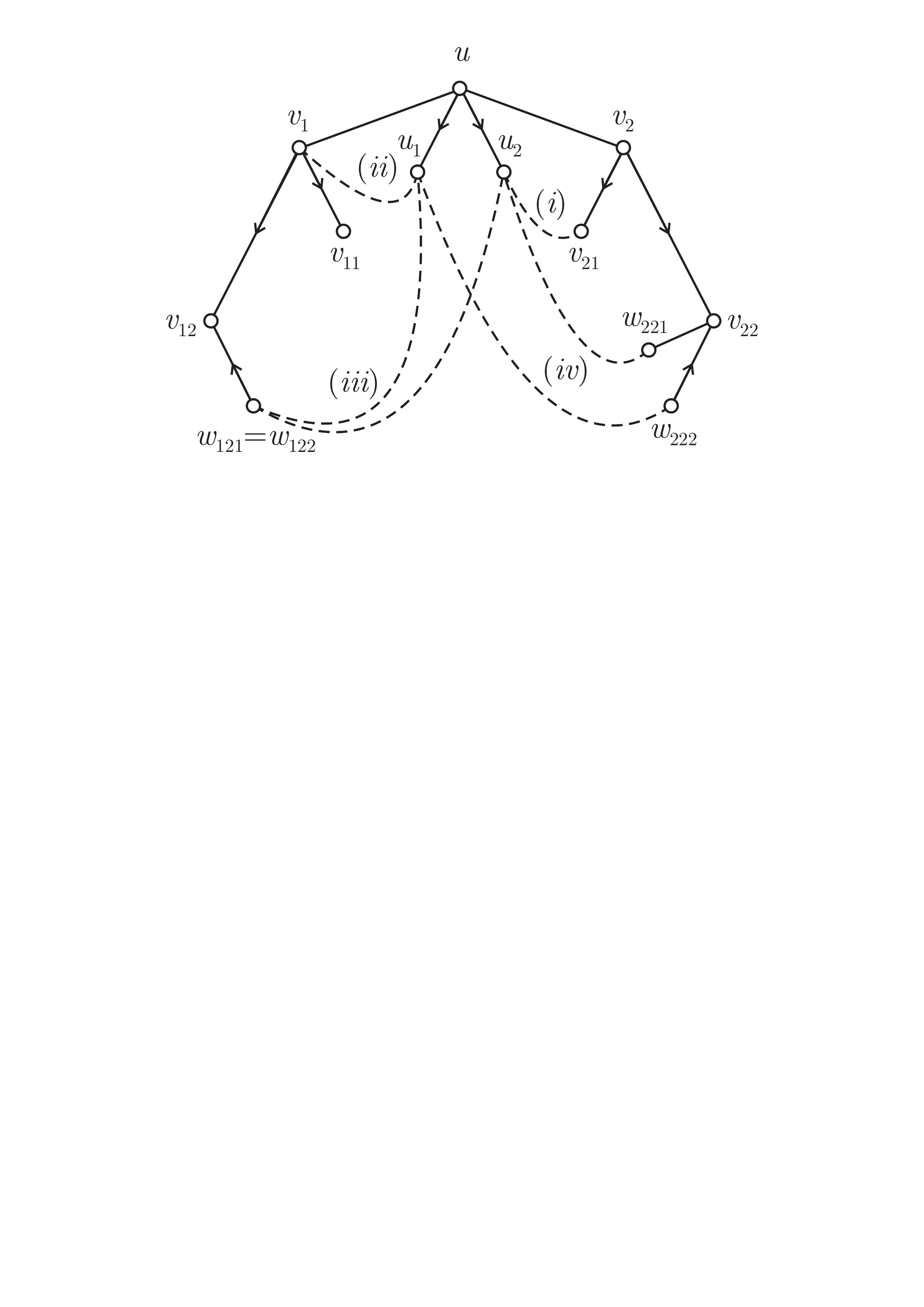}
    \end{center}
    \vskip-11.25cm
	\caption{Repeated vertices in a $(2,2)$-regular mixed graph: $(i)$ $v_{21}\in \Rep(u)$;
$(ii)$ $v_{1}\in \Rep(u)$;
$(iii)$ $w_{121}\in \Rep(u)$;
$(iv)$ $w_{221}\in \Rep(u)$.}
	\label{fig1}
\end{figure}

\begin{itemize}
\item[$(i)$]
If, for some $h=1,\ldots,z$, we have $\dist(u_h,v_{ij})<k$, then there exist two paths of length at most $k$ from $u$ to $v_{ij}$ and, hence, $v_{ij}\in \Rep(u)$ (note that this includes the case $u_h=v_{ij}$).
\item[$(ii)$]
If, for some $h=1,\ldots,z$, we have $\dist(u_h,v_{ij})=k$ and the shortest path from $u_h$ to $v_{ij}$ goes through $v_i$, then there are two paths of length $\le k$ from $u$ to  $v_{i}$ (one of length 1 and the other of length $k$). Hence, $v_{i}\in \Rep(u)$. In fact, notice that, in this case, $\dist(u_h,v_{i\ell})=k$ for every $\ell=1,\ldots,z$.
\end{itemize}
If, for every $h=1,\ldots,z$, we have $\dist(u_h,v_{ij})=k$, let $w_{ijl}$ denote, for $\ell=1,\ldots,z$, the predecessor vertices to $v_{ij}$ in the paths (of length $k$) from every $u_h$ to $v_{ij}$ (see the dashed lines in Figure~\ref{fig1}). Now we have again two cases:
\begin{itemize}
\item[$(iii)$]
If, for some $\ell,\ell'=1,\ldots,z$, we have $w_{ij\ell}=w_{ij\ell'}$, then there are two paths of length $k$ from $u$ to  $w_{ij\ell}$. Thus, $w_{ij\ell}\in \Rep(u)$.
\item[$(iv)$]
Otherwise, since $z^-(v_{ij})=z$, there must be at least one $\ell$ such that $w_{ij\ell}v_{ij}$ is an edge. But, in this case, there are two paths from $u$ to $w_{ij\ell}$ of length at most $k(\ge 3)$ and, so, $w_{ij\ell}\in \Rep(u)$.
\end{itemize}
As a consequence, we see that, for each $i=1,\ldots,r$  there is a vertex, which is either $v_i$, $v_{ij}$, or $w_{ij\ell}$, belonging to $\Rep(u)$.
Moreover, different values of $i$ lead to different repeated vertices, so that the paths from $u$ to them must be also different. In any case, the multiset $\Rep(u)$ has at least $r$ elements, and the result follows.
\end{proof}

The new upper bound $M(r,z,k)-r$ for diameter $k \geq 3$ can be even improved for certain cases, as the next proposition states.
\begin{proposition}
\label{propo}
Let $G$ be a $(r,z)$-regular mixed graph of diameter $k \geq 3$ with order $N$. If $r$ and $z$ are odd, and $k\equiv 2\ \mod 3$, then
 \begin{equation}
N \le M(r,z,k)-r-1.
\end{equation}
\end{proposition}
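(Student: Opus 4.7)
The plan is to sharpen Theorem~\ref{the:newbound} by a simple parity argument. The idea is that, under the hypotheses, the bound $M(r,z,k)-r$ has the wrong parity to be attained by the order $N$, so it can be reduced by one.

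First, I would observe that, since $G$ is $r$-regular with respect to undirected edges, the handshake identity gives $Nr\equiv 0\pmod 2$; hence with $r$ odd one immediately has $N$ even. The remaining task is therefore to show that $M(r,z,k)-r$ is odd, i.e.\ that $M(r,z,k)$ is even whenever $r,z$ are odd and $k\equiv 2\pmod 3$.

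To do so, I would work with the recurrence~\eqref{recurNi}, namely $N_i=(r+z-1)N_{i-1}+zN_{i-2}$ with $N_0=1$, $N_1=r+z$. Reducing modulo $2$ with $r,z$ both odd, one has $r+z\equiv 0$, $r+z-1\equiv 1$ and $z\equiv 1$, so the recurrence becomes $N_i\equiv N_{i-1}+N_{i-2}\pmod 2$. Starting from $N_0\equiv 1$, $N_1\equiv 0$, a direct check shows that the residues form the periodic sequence
\begin{equation*}
(N_0,N_1,N_2,N_3,N_4,N_5,N_6,\ldots)\equiv(1,0,1,1,0,1,1,0,1,\ldots)\pmod 2,
\end{equation*}
of period $3$ for $i\ge 1$, with $N_i$ even iff $i\equiv 1\pmod 3$. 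Summing one full period contributes $0\pmod 2$, so the parity of $M(r,z,k)=\sum_{i=0}^k N_i$ depends only on $k\bmod 3$: explicitly, one finds $M(r,z,k)\equiv 0\pmod 2$ precisely when $k\equiv 2\pmod 3$.

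Combining these two observations, under the hypotheses of the proposition $M(r,z,k)$ is even and $r$ is odd, so $M(r,z,k)-r$ is odd; but $N$ is even, which forces $N\le M(r,z,k)-r-1$. The only non-routine step is the parity computation, and it is really just the short verification that the Fibonacci-like recurrence $N_i\equiv N_{i-1}+N_{i-2}\pmod 2$ has period $3$ starting from $(1,0)$; nothing else beyond Theorem~\ref{the:newbound} is needed.
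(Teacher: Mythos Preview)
Your proposal is correct and follows essentially the same approach as the paper: both argue that $N$ is even (since $r$ is odd) and then show, via the mod-$2$ reduction of the recurrence~\eqref{recurNi} to $N_i\equiv N_{i-1}+N_{i-2}$ with initial values $(1,0)$ and resulting period-$3$ parity pattern $1,0,1,1,0,1,\ldots$, that $M(r,z,k)$ is even when $k\equiv 2\pmod 3$, whence $M(r,z,k)-r$ is odd and Theorem~\ref{the:newbound} can be improved by one. You spell out a couple of details (the handshake identity, the explicit mod-$2$ coefficients) a bit more fully than the paper does, but the argument is the same.
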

\begin{proof}
The proof is based on a parity argument. Namely, since $r$ is odd, $N$ must be even.
Thus, let us check the parity of $M(r,z,k)-r=\sum_{i=0}^k N_i-r$.
Let $\pi_i\in\{0,1\}$ denote the parity of $N_i$ in the obvious way.
If $z$ is odd, we have that $\pi_0=1$, $\pi_1=0$ and, from \eqref{recurNi} we get
the recurrence $\pi_i=\pi_{i-1}+\pi_{i-2}$ for $i\ge 2$. This gives the following sequence for the $\pi_i$'s: $1,0,1,1,0,1,1,0,1,1,\ldots$ Thus, $\sum_{i=0}^k N_i$ is even for every $k\equiv 2\ \mod 3$. Then, as $r$ is odd, we get the result.
\end{proof}

\section{The case of $(1,1)$-regular mixed graphs with diameter three}

In this section we show that the upper bound \eqref{improved bound} is attained for exactly three mixed graphs in the case $r=z=1$ and $k=3$.

\begin{figure}[t]
    \begin{center}
        \includegraphics[width=14cm]{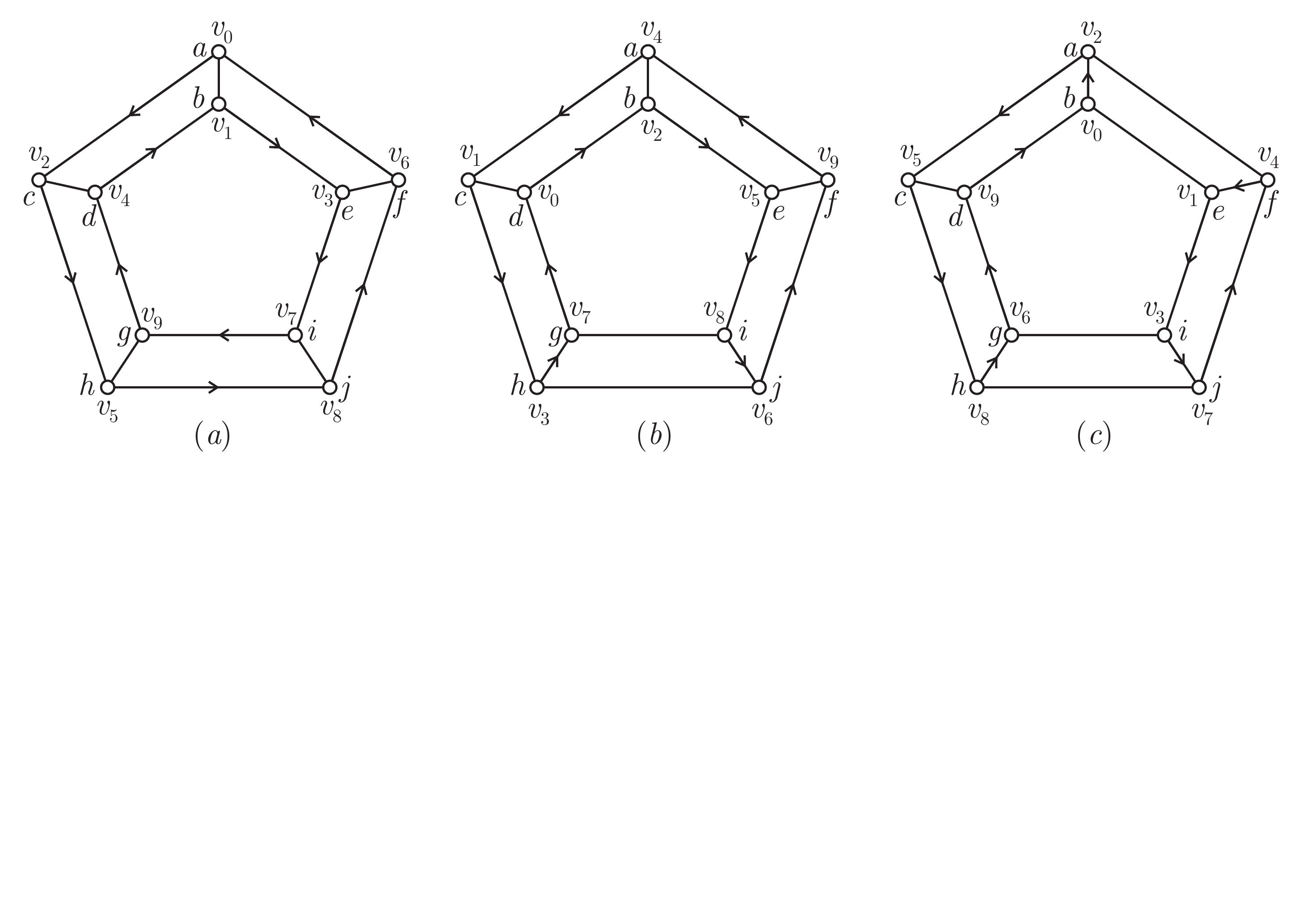}
    \end{center}
    \vskip-5.5cm
	\caption{The unique three non-isomorphic $(1,1)$-regular mixed graphs with diameter $k=3$ and order $N=10$.}
\label{fig2}
\end{figure}

\begin{proposition}
\label{propo3graphs}
Let $G$ be a $(1,1)$-regular mixed graph with diameter $k=3$ and maximum order $N=10$ given by \eqref{improved bound}. Then, $G$ is isomorphic to one of the three mixed graphs depicted in Figure~\ref{fig2}.
\end{proposition}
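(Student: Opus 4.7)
The plan is to prove existence by a direct check on each of the three graphs and then prove uniqueness by expanding the Moore tree from an arbitrary vertex and analysing the single forced identification. For existence, I would simply verify by breadth-first search from each vertex that each mixed graph in Figure~\ref{fig2} is $(1,1)$-regular, has ten vertices, and has diameter exactly~$3$.

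For uniqueness, let $G$ be any $(1,1)$-regular mixed graph with diameter~$3$ and $|V(G)|=10$. Since $M(1,1,3) = 1+2+3+5 = 11$, the bound of Theorem~\ref{theo1} is attained, so $|\Rep(u)|=1$ for every vertex $u$. Fix $u$ and denote by $v_1$ its edge-neighbor, $u_1$ its out-arc neighbor, $v_{11}$ the out-arc neighbor of $v_1$, and $u_{11}$, $u_{12}$ the out-arc neighbor and edge-partner of $u_1$; these six vertices together with $u$ fill levels $0$, $1$ and $2$ of the Moore tree $MT(u)$. Because $z=1$, case~$(iii)$ in the proof of Theorem~\ref{theo1} is vacuous, so the single repeat is supplied by case~$(i)$, case~$(ii)$, or case~$(iv)$.

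For each of the three cases I would propagate the forced identification through $MT(u)$ using only: (a) in-, out-, and edge-degree all equal to~$1$; (b) no parallel arcs or edges; and (c) the diameter~$3$ condition. In case~$(i)$, the alternatives $v_{11}=u_1$ and $v_{11}=u_{11}$ are immediately ruled out by comparing in-arcs, leaving $v_{11}=u_{12}$ or $v_{11}$ a depth-$3$ descendant of $u_1$, with half of the latter sub-cases forbidden again by edge- or in-arc-degree. In case~$(ii)$, the length-$3$ path $u_1\to\cdots\to v_1$ forces either $u_{11}$ or $u_{12}$ to be the in-arc neighbor of $v_1$. In case~$(iv)$, the forced edge $w_{111}\sim v_{11}$ identifies the edge-partner of $v_{11}$ with a depth-$3$ descendant of $u_{11}$ or $u_{12}$. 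A useful coarse invariant cross-cutting all cases is the cycle-type of the arc sub-digraph: the arcs of a $(1,1)$-regular graph on $10$ vertices form a disjoint union of directed cycles of lengths summing to $10$, and the diameter condition forbids most cycle-types, leaving only those realised in Figure~\ref{fig2}.

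The main obstacle is controlling the branching of sub-cases, especially in case~$(i)$ (four candidate positions for $v_{11}$) and case~$(iv)$ (three candidate positions for $w_{111}$). The key point that tames the analysis is that once the location of the single repeat is fixed, the three unit-degree constraints at each already-placed vertex propagate almost completely through $MT(u)$, so every surviving sub-case reduces to a small finite check whose outcome is either a contradiction with $(1,1)$-regularity, the no-parallel condition, or diameter~$3$, or else a graph which, up to isomorphism, is one of the three listed in Figure~\ref{fig2}.
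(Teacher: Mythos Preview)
Your plan is correct and follows essentially the same route as the paper: both arguments fix a vertex $u$, build the Moore tree, use that $|\Rep(u)|=1$, split according to the cases $(i)$--$(iv)$ of Theorem~\ref{the:newbound}, discard case~$(iii)$ because $z=1$, and then propagate the degree and diameter constraints through the remaining cases until only the three graphs of Figure~\ref{fig2} survive.

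The one organisational difference worth noting is the treatment of case~$(i)$. The paper disposes of it in one line: since $N=10$ forces the level sizes $|G_1(u)|=2$, $|G_2(u)|=3$, $|G_3(u)|=4$, any identification of $v_{11}$ with a vertex already in the $u_1$-branch at depth $\le 2$ drops $|G_2(u)|$ below~$3$, and the sole depth-$3$ possibility that survives the in-arc check (namely $v_{11}$ equal to the edge-partner of $u_{11}$) forces a \emph{second} repeat, so $|G_3(u)|<4$. You instead keep case~$(i)$ alive and plan to kill its sub-cases one by one; this works---each of your surviving sub-cases ($v_{11}=u_{12}$ and $v_{11}=$ edge-partner of $u_{11}$) does collapse under propagation---but it is more laborious than the paper's counting shortcut. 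Your cycle-type invariant on the arc sub-digraph is a pleasant structural observation that the paper does not use; it is not needed for the argument but could streamline the isomorphism checks at the end.
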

\begin{proof}
We divide the proof according to the four cases $(i)$--$(iv)$ given in Theorem \ref{the:newbound}. Let $u$ be any vertex of $G$. The remaining vertices of $G$ fall into one of the sets $G_i(u)$, according to their corresponding distance  $i \in \{1,2,3\}$ from $u$. Then, $|G_1(u)|=2$, and it is easy to see that $|G_2(u)|=3$ and $|G_3(u)|=4$ since, otherwise, $G$ would have order $N<M(1,1,3)-1=10$. Now, observe that case $(i)$ is impossible since $\dist(u_1,v_{11})<3$ would imply $|G_3(u)|<4$. Also, case $(iii)$ is not possible simply because $z=1$. So, let us suppose that we are in case $(ii)$, that is, $\dist(u_1,v_{11})=3$ and the shortest path from $u_1$ to $v_{11}$ goes through $v_1$. Hence, $G$ contains one of the two induced mixed subgraphs depicted in Figure~\ref{fig3} (from now on, we follow the vertex labeling in this figure, where $v_0=u, v_2=u_1$ and $v_3=v_{11}$). Next, we proceed in detail with case $(iia)$ and we leave to the reader cases $(iib)$ and $(iv)$, where similar reasoning leads to the same mixed graphs.

\begin{figure}[t]
    \begin{center}
\includegraphics[width=12cm]{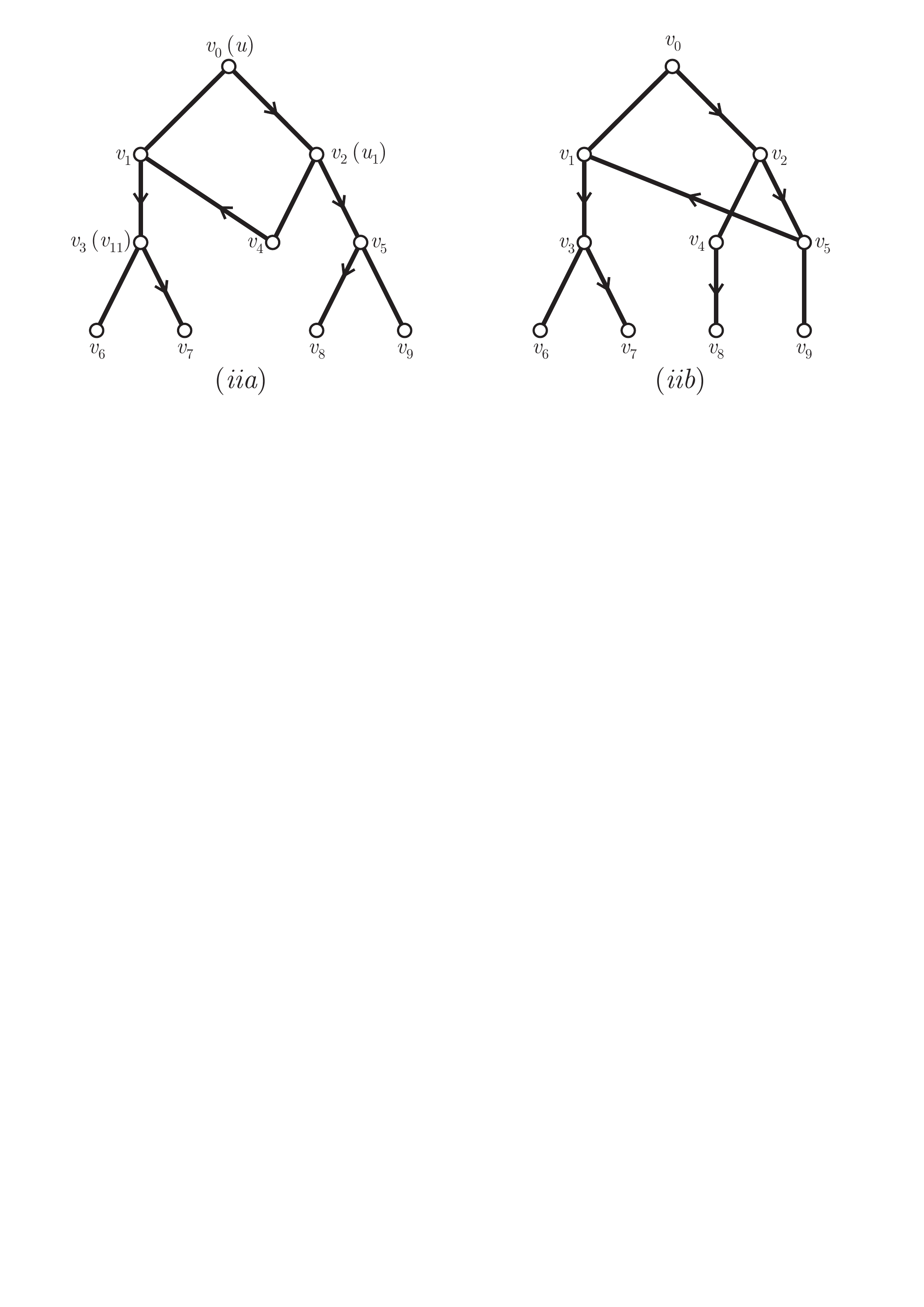}
\end{center}
    \vskip-12.25cm
	\caption{The two cases derived from $(ii)$ according to Theorem \ref{the:newbound} when $r=1,z=1$ and $k=3$.}
\label{fig3}
\end{figure}

\begin{figure}[t]
    \begin{center}
\includegraphics[width=16cm]{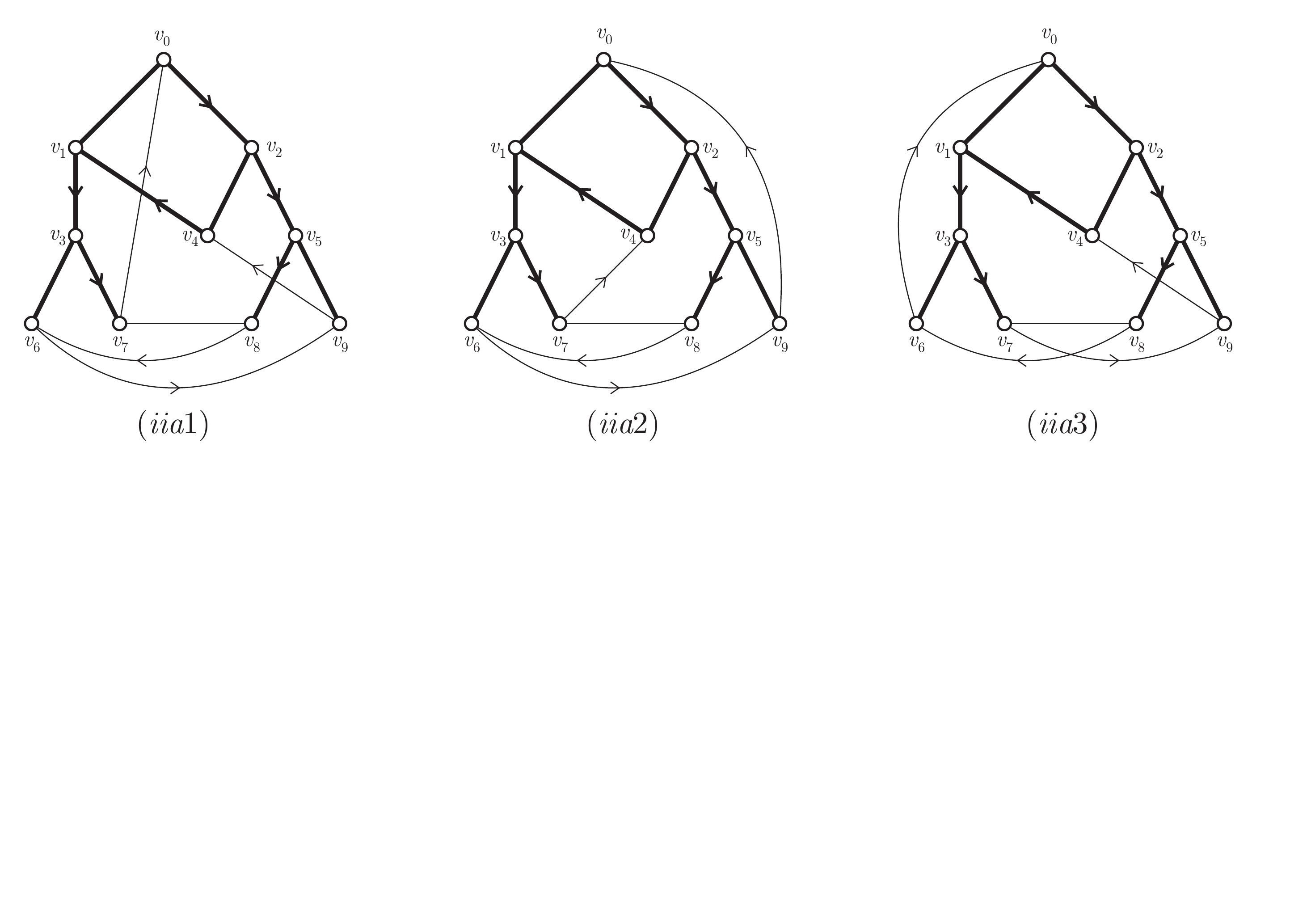}
\end{center}
    \vskip-6.25cm
	\caption{Three cases derived from $(iia)$ giving non-isomorphic mixed graphs.}
\label{figMA2}
\end{figure}

Due to its regularity, $G$ must contain the edge $v_7 \sim v_8$. Moreover, every vertex of $G$ is at distance $\leq 3$ from $v_2$ except $v_6$. This means that there must exist an arc $x\rightarrow v_6$, where $x \in \{v_8,v_9\}$.

\begin{itemize}
\item[$\bullet$]
Let $x=v_8$. Another arc $y\rightarrow v_9$ is needed to have $\dist(v_1,v_9) \leq 3$, where $y \in \{v_6,v_7\}$.
\begin{itemize}
\item[$\bullet$]
If $y=v_6\rightarrow v_9$ we have just two possibilities to complete the regularity of the mixed graph:
\begin{itemize}
\item[$\bullet$]
The remaining arcs are $v_7\rightarrow v_0$ and $v_9\rightarrow v_4$, which yield the mixed graph of Figure~\ref{figMA2}$(iia1)$, which is isomorphic to the one in Figure~\ref{fig2}$(b)$.
\item[$\bullet$]
The last arcs are  $v_7\rightarrow v_4$ and $v_9\rightarrow v_0$, in which case we obtain the mixed graph of Figure~\ref{figMA2}$(iia2)$, which is isomorphic to the one in Figure~\ref{fig2}$(c)$.
\end{itemize}
\item[$\bullet$]
If $y=v_7\rightarrow v_9$, we have again two possibilities:
\begin{itemize}
\item[$\bullet$]
The arcs $v_6\rightarrow v_0$ and $v_9\rightarrow v_4$ yield the mixed graph of Figure~\ref{figMA2}$(iia3)$, which is isomorphic to the one in Figure~\ref{fig2}$(a)$.
\item[$\bullet$]
The arcs $v_6\rightarrow v_4$ and $v_9\rightarrow v_0$ give rise to a mixed graph isomorphic to the one in Figure~\ref{fig2}$(b)$.
\end{itemize}
\end{itemize}
A scheme of the above cases is the following.
$$
x=v_8\rightarrow v_6\ \Rightarrow\
\left\{
\begin{array}{l}
y=v_6\rightarrow v_9\ \Rightarrow\
\left\{
\begin{array}{l}
v_7\rightarrow v_0\ \&\ v_9\rightarrow v_4\ \rightsquigarrow \ (b)\\
\mbox{or} \\
v_7\rightarrow v_4\ \&\ v_9\rightarrow v_0\ \rightsquigarrow \ (c)
\end{array}
\right.
\\
\mbox{or}
\\
y=v_7\rightarrow v_9\ \Rightarrow\
\left\{
\begin{array}{l}
v_6\rightarrow v_0\ \&\ v_9\rightarrow v_4\ \rightsquigarrow \ (a)\\
\mbox{or} \\
v_6\rightarrow v_4\ \&\ v_9\rightarrow v_0\ \rightsquigarrow \ (b)
\end{array}
\right.
\end{array}
\right.
$$
\item[$\bullet$]
Let $x=v_9$. We must add the arc $v_7\rightarrow v_9$ in order to have $\dist(v_1,v_9) \leq 3$. Now, to complete the mixed graph we have two possibilities:
\begin{itemize}
\item[$\bullet$]
The arcs $v_6\rightarrow v_0$ and $v_8\rightarrow v_4$ yield a mixed graph isomorphic to the one in Figure~\ref{fig2}$(b)$.
\item[$\bullet$]
The arcs $v_6\rightarrow v_4$ and $v_8\rightarrow v_0$ complete a mixed graph isomorphic to the one in Figure~\ref{fig2}$(c)$.
\end{itemize}
Schematically,
$$
x=v_9\rightarrow v_6\ \Rightarrow\ v_7\rightarrow v_9\ \Rightarrow\
\left\{
\begin{array}{l}
v_6\rightarrow v_0\ \&\ v_8\rightarrow v_4\ \rightsquigarrow \ (b)\\
\mbox{or}\\
v_6\rightarrow v_4\ \&\ v_8\rightarrow v_0\ \rightsquigarrow \ (c)
\end{array}
\right.
$$
\end{itemize}
This completes the proof.
\end{proof}


Note that the mixed graph  in Figure~\ref{fig2}$(a)$ is the line digraph of the cycle $C_5$ $($seen as a digraph, so that each edge corresponds to a digon\/$)$. It is also the Cayley graph of the dihedral group $D_5=\langle r,s\, |\, r^5\!=\!s^2\!=\!(rs)^2\!=\!1 \rangle$, with generators $r$ and $s$.
The spectrum of this mixed graph is that of the $C_5$ cycle plus a $0$ with multiplicity $5$.  Namely,
$$
\textstyle
\spec G=\left\{2,\ \left(-\frac{1}{2}+\frac{\sqrt{5}}{2}\right)^2,\ 0^5,\ \left(-\frac{1}{2}-\frac{\sqrt{5}}{2}\right)^2 \right\}.
$$
This is because $G$ is the line digraph of $C_5$. As a consequence, the only difference between $\spec G$ and $\spec C_5$ are the additional $0$'s (see Balbuena, Ferrero, Marcote, and Pelayo \cite{bfmp03}.)
In fact, the mixed graphs of Figures~\ref{fig2}$(b)$ and \ref{fig2}$(c)$ are cospectral with $G$, and can be obtained by applying  a recent method
to obtain cospectral digraphs with  a locally line digraph. The right modifications to obtain the mixed graphs $(b)$ and $(c)$ from mixed graph $(a)$ are depicted in Figure~\ref{metode-digraf}.
For more details, see Dalf\'o and Fiol \cite{df16}.

\begin{figure}[t]
    \begin{center}
\includegraphics[width=12cm]{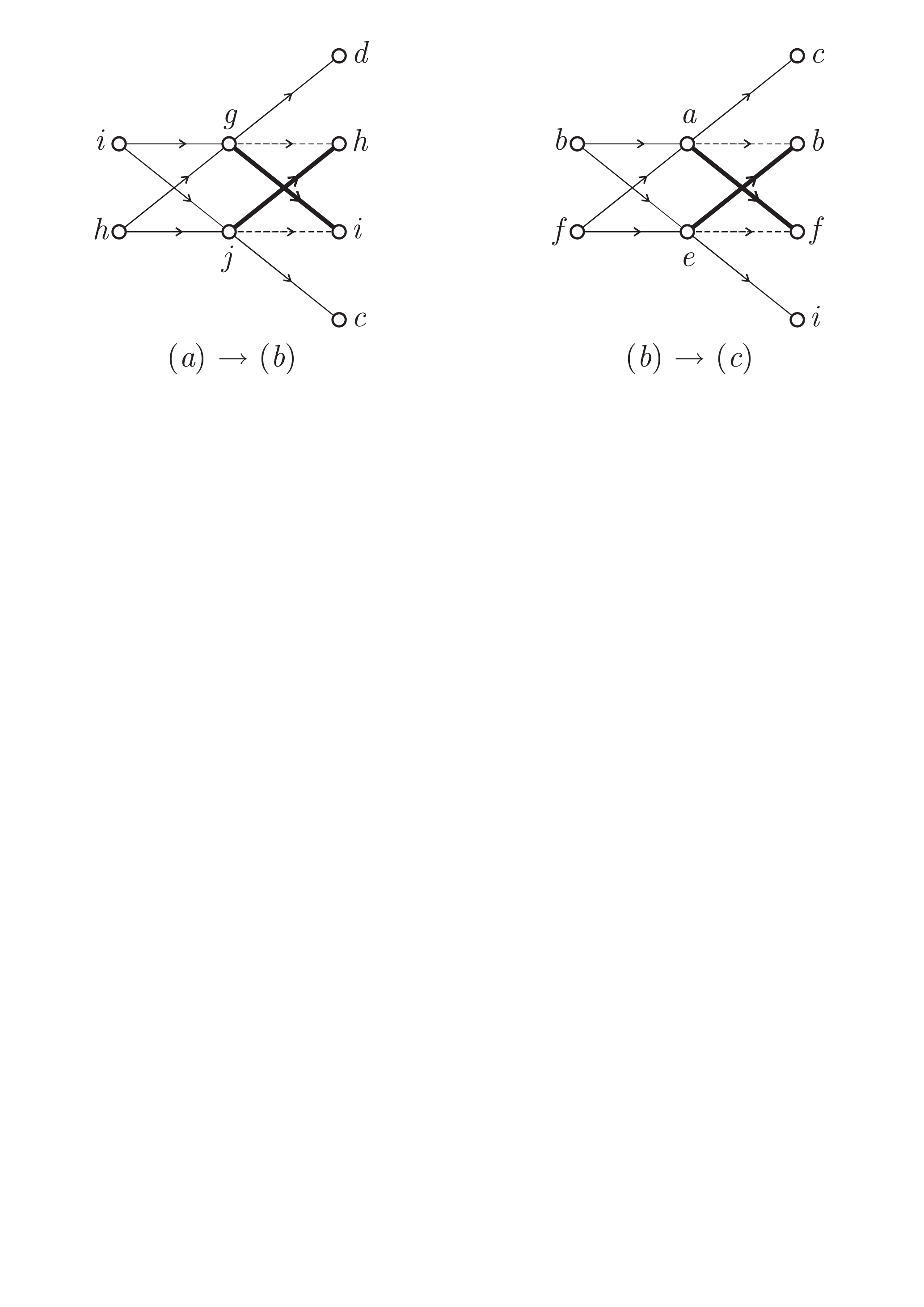}
\end{center}
    \vskip-12.5cm
	\caption{The method for obtaining the cospectral digraphs of Figure \ref{fig2}.}
\label{metode-digraf}
\end{figure}

Two other interesting characteristics of these mixed graphs are the following:
\begin{itemize}
\item
Each of the three mixed graphs is isomorphic to its converse (where the directions of the arcs are reversed).
\item
Each of these mixed graphs can be obtained as a proper orientation of the so-called Yutsis graph of the $15j$ symbol of the second kind (see Yutsis, Levinson, and Vanagas~\cite{ylv62}). This is also called the pentagonal prim graph. Notice that it has girth 4 and, curiously, its diameter is 3, in every of its considered orientations here.
\end{itemize}

The result of Proposition \ref{propo3graphs} could prompt us to look for a whole family of  $(1,1)$-regular mixed graphs attaining the upper bound $M(1,1,k)-1$ for any diameter $k \geq 3$. Nevertheless, as a consequence of Proposition \ref{propo}, this is not possible, since such a bound cannot be attained for some values of $k$.

\begin{corollary}
Let $G$ be a $(1,1)$-regular mixed graph with  $N$ vertices and diameter $k=2+3s$ with $s \geq 1$. Then,
\begin{equation}
\label{coro-fibo}
N \leq \theta_1\phi_1^{k+1}+\theta_2\phi_2^{k+1}-4,
\end{equation}
where $\theta_{1,2}=1\pm \frac{2}{\sqrt{5}}$ and $\phi_{1,2}=\frac{1}{2}(1\pm\sqrt{5})$.
\end{corollary}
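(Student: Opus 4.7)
The plan is to recognize this as a direct corollary of Proposition~\ref{propo}, combined with a closed-form simplification of the Moore-like bound $M(1,1,k)$ from \eqref{eq:moorebound4}. First I would verify the hypotheses of Proposition~\ref{propo}: both $r=1$ and $z=1$ are odd, and $k=2+3s \equiv 2 \pmod 3$, so Proposition~\ref{propo} directly yields
$$
N \le M(1,1,k) - r - 1 = M(1,1,k) - 2.
$$
It therefore suffices to prove that $M(1,1,k) = \theta_1\phi_1^{k+1} + \theta_2\phi_2^{k+1} - 2$, since subtracting the extra $2$ produces the $-4$ in \eqref{coro-fibo}.

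For the closed-form identification, I would substitute $r=z=1$ into the parameters listed after \eqref{eq:moorebound4}. A quick computation gives
$$
v = (1+1)^2 + 2(1-1) + 1 = 5,
$$
so $u_1 = (1-\sqrt 5)/2 = \phi_2$ and $u_2 = (1+\sqrt 5)/2 = \phi_1$, while $A = (\sqrt 5 - 3)/(2\sqrt 5)$ and $B = (\sqrt 5 + 3)/(2\sqrt 5)$. The main calculation is to rewrite
$$
A\,\frac{u_1^{k+1}-1}{u_1-1} + B\,\frac{u_2^{k+1}-1}{u_2-1}
$$
as $\frac{A}{u_1-1}\,u_1^{k+1} + \frac{B}{u_2-1}\,u_2^{k+1} - \bigl(\tfrac{A}{u_1-1}+\tfrac{B}{u_2-1}\bigr)$. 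Rationalizing, I would check that $A/(u_1-1) = 1 - 2/\sqrt 5 = \theta_2$ and $B/(u_2-1) = 1 + 2/\sqrt 5 = \theta_1$, which pair correctly with $u_1=\phi_2$ and $u_2=\phi_1$. Since $\theta_1+\theta_2 = 2$, this yields
$$
M(1,1,k) = \theta_1\phi_1^{k+1} + \theta_2\phi_2^{k+1} - 2,
$$
and combining with the bound from Proposition~\ref{propo} finishes the proof.

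The argument is essentially routine; there is no genuine obstacle. The only mildly delicate step is the algebraic simplification that matches the coefficients $A/(u_1-1)$ and $B/(u_2-1)$ with exactly $\theta_2$ and $\theta_1$, respectively (and in particular checking that the constant term $\theta_1+\theta_2$ equals $2$, which is what makes the final constant come out to $-2-2=-4$). No new structural idea about mixed graphs is needed beyond Proposition~\ref{propo}.
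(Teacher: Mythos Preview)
Your proposal is correct and follows exactly the approach of the paper's own proof, which simply says to apply Proposition~\ref{propo} with $r=z=1$ and to compute $M(1,1,k)$ from \eqref{eq:moorebound4}; you have merely spelled out the algebra that the paper leaves to the reader.
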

\begin{proof}
Apply Proposition \ref{propo} with $r=z=1$ and $M(1,1,k)$ computed from \eqref{eq:moorebound4}.
\end{proof}
Note that, in this last case, \eqref{recurNi} yields the recurrence $N_i=N_{i-1}+N_{i-2}$, with $N_0=1$ and $N_1$, so defining a Fibonacci sequence.
In fact, with the usual numbering of such a sequence ($F_1=1$, $F_2=1$, $F_3=2$,\ldots), we have $M(1,1,k)=F_{k+4}-2$ and so, for the case under consideration, \eqref{coro-fibo} becomes
$$
N\le F_{k+4}-4.
$$

\vskip .6cm

\noindent{\bf Acknowledgments.}
The authors would like to thank an anonymous referee whose useful comments lead to a significant  improvement of the manuscript.
This research was partially supported by MINECO under project MTM2017-88867-P, and AGAUR under project 2017SGR1087 (C. D. and M. A. F.). The author N. L. has been supported, in part, by grant MTM2013-46949-P from the {\em Ministerio de Econom\'{\i}a y Competitividad}, and 2014SGR1666 {\em Catalan Research Council}.

%
\section*{References}

\end{document}